\newtheorem{theorem}{Theorem}[section]
\newtheorem{lemma}[theorem]{Lemma}
\newtheorem{thm}[theorem]{Theorem}
\newtheorem{prop}[theorem]{Proposition}
\newtheorem{cor}[theorem]{Corollary}
\theoremstyle{definition}
\newtheorem{definition}[theorem]{Definition}
\newtheorem{example}[theorem]{Example}
\theoremstyle{remark}
\newtheorem{remark}[theorem]{Remark}
\newtheorem{rmk}[theorem]{Remark}
\numberwithin{equation}{section}
\begin{document}
\title{ Positivity of $\Delta$-genera for connected polarized demi-normal schemes}
\author{Jingshan Chen}
\address{school of mathematics and statistics, Hubei Minzu University}
\email{chjingsh@hbmzu.edu.cn}
\author{Yongchang Chen}
\address{department of mathematics, University of Houston }
\email{ychen224@CougarNet.uh.edu}

\thanks{Thanks to Prof. Jinxing Cai and Prof. Wenfei Liu.}

\subjclass[2010]{14C20,14J10}



\keywords{Gorenstein KSBA  stable log schemes, KSBA stable schemes, $\Delta$-genus, polarized demi-normal schemes, inequality of Noether type}

\begin{abstract}

In this paper, we show that the  $\Delta$-genus $\Delta(X,\mathcal{L})\ge 0$ for any  connected polarized demi-normal scheme $(X,\mathcal{L})$ over an algebraically closed field $k$. As an application, we obtain $\Delta(X,I(K_X+\Lambda))\ge 0$ for any KSBA stable log scheme $(X,\Lambda)$, where  $I$ is the Cartier index of $K_X+\Lambda$. We also construct examples of KSBA stable log schemes with $I=1$ and $\Delta(X,K_X+\Lambda)= 0$, which shows the inequality is sharp when $I=1$.


\end{abstract}

\maketitle

\section{Introduction}

We work over an algebraically closed field $k$. 
A scheme $X$ is demi-normal if it is $S_2$ and at worst nodal at any generic point of codimension 1. The term "demi-normal" was coined by Koll\'ar in \cite{KollarSMMP} to define KSBA stable (log) schemes.  A KSBA stable log scheme $(X,\Lambda)$ is a demi-normal scheme $X$ with a boundary divisor $\Lambda$ such that it admits only slc singularities and $K_X+\Lambda$ is an ample $\mathbb{Q}$-Cartier divisor. A KSBA stable scheme is a KSBA stable log scheme with empty boundary. 
KSBA stable (log) schemes are the fundamental objects to construct the compactifications of moduli spaces of smooth varieties of general type. 

Given a demi-normal scheme $X$, it is natural to consider a polarization, i.e., an ample invertible sheaf $\mathcal{L}$ on $X$. 
For a KSBA stable log scheme $(X,\Lambda)$, $I(K_X+\Lambda)$ is a polarization on it, where $I$ is the Cartier index of $K_X+\Lambda$. 
Fujita introduced an invariant, $\Delta$-genus $\Delta(X, \mathcal{L}):=(\mathcal{L})^{\dim X}-h^0(X, \mathcal{L})+\dim X$ for polarized varieties. He showed that $\Delta(X,\mathcal{L})\ge 0$ for any irreducible polarized variety (see \cite[Theorem (1.4.2)]{Fuj90}). In this paper, we show that 
\begin{thm}[see Theorem \ref{mainthm}]
	For any connected polarized demi-normal scheme $(X,\mathcal{L})$, we have
	$$\Delta(X,\mathcal{L})\ge 0.$$
\end{thm}
As a result, we obtain $\Delta(X,I(K_X+\Lambda))\ge 0$ for any KSBA stable log scheme $(X,\Lambda)$. In particular,  when $(X,\Lambda)$ is Gorenstein, i.e., $I=1$, we have $\Delta(X,K_X+\Lambda)\ge 0$, which is an inequality of Noether type. We remark that when $\mathrm{dim} X=2$ and the boundary divisor $\Lambda$ is reduced,  the inequality $\Delta(X, K_X+\Lambda)=(K_X+\Lambda)^2-p_g(X,\Lambda)+2\ge 0$ is just the stable log Noether inequality established  by Liu and Rollenske in \cite{LR13}. We note that this paper is strongly inspired by Liu and Rollenske's work. 
 
We also characterize those connected polarized demi-normal schemes with $(X,\mathcal{L})=0$ in Theorem \ref{delta0schemes}. They are trees of varieties $X_i$'s with $\Delta(X_i,\mathcal{L}|_{X_i})=0$ glued along hyperplanes. 
Then we construct examples of KSBA stable log scheme $(X,\Lambda)$ with $I=1$ and  $\Delta(X,K_X+\Lambda)=0$  in Example \ref{exmp}, which implies the inequality $\Delta(X,K_X+\Lambda)\ge 0$ is sharp for KSBA stable log schemes with $I=1$. 

The paper is organised as follows. In \textsection 2, we state some facts about polarized demi-normal schemes. In \textsection 3, we review Fujita's work on $\Delta$-genus of polarized varieties. In \textsection 4, we prove that $\Delta(X,\mathcal{L})\ge 0$ for connected polarized demi-normal schemes. 


\subsection*{Acknowledgments:}
We would like to express our sincere gratitude to Prof. Jinxing Cai and Prof. Wenfei Liu, for their invaluable guidance and instruction. 
Additionally, we would like to extend our thanks to the anonymous reviewers for their insightful comments and suggestions. Their insightful comments and suggestions have been instrumental in improving this paper. 
\subsection{Notations and conventions}
We work with schemes defined over an algebraically closed field $k$. They are assumed to be proper and of finite type over $k$. 
\begin{itemize}
	\item  A variety is an integral scheme of finite type over $k$.
	\item  We assume that a normal singular variety always admits a resolution of singularities.
	\item  By abuse of notation, we sometimes do not distinguish between a Cartier divisor $D$ and its associated invertible sheaf $\mathcal{O}_X(D)$.
	\item  We use '$\equiv$' to denote linear equivalence relation of divisors.
	\item  If $D$ is a Cartier divisor on $X$, then we denote by $\Phi_{|D|}:X\dashrightarrow \mathbb{P}:=|D|^*$  the rational map defined by the linear system $|D|$.
	\item  A hyperplane $H$ on a scheme $X$ with respect to $\mathcal{O}_X(1)$ is a subscheme isomorphic to $\mathbb{P}^{\dim X-1}$ such that $\mathcal{O}_X(1)|_{H}\cong \mathcal{O}_{\mathbb{P}^{\dim X-1}}(1)$. 
	\item We use $\mathrm{Bs} |L|$ to denote the set-theoretic intersection of all the members of $|L|$.
	\item We use  $(\mathcal{L})^n$ to denote the $n$-th self-intersection of the associated Cartier divisor of $ \mathcal{L}$ in the Chow ring. 
\end{itemize}
\section{Demi-normal schemes}

A scheme $X$ is {\bf demi-normal} if it is $S_2$ and at worst nodal at any generic point of codimension 1. The schemes we consider are always assumed to be connected. 

Let $X$ be a demi-normal scheme and  let $\pi: \bar X \to X$ denote its normalization morphism. The conductor ideal
$ \mathcal{H}om_{\mathcal{O}_X}(\pi_*\mathcal{O}_{\bar{X}}, \mathcal{O}_X)$
is an ideal sheaf on both $X$ and $\bar{X}$ and hence defines subschemes
$D\subset X \text{ and } \bar D\subset \bar X,$
both reduced and of pure codimension 1; we often refer to $D$ as the non-normal locus of $X$.

A {\bf polarized demi-normal scheme} is a pair $(X, \mathcal{L})$, where $X$ is a proper demi-normal scheme and $\mathcal{L}$ is an ample invertible sheaf on $X$. 

\begin{definition}	
	The {\bf $\Delta$-genus} of a polarized demi-normal scheme $(X,\mathcal{L})$ is defined as $$\Delta(X, \mathcal{L}):=(\mathcal{L})^{\dim X}-h^0(X, \mathcal{L})+\dim X.$$
\end{definition}
\begin{remark}
	Fujita defined $\Delta$-genus for polarized varieties (cf. Definition \ref{FujitaDeltagenus}). We generalize it to polarized demi-normal schemes. 
\end{remark}

Let $(X,\mathcal{L})$ be a connected polarized proper demi-normal scheme. Assume that $X$ can be decomposed into two connected components, i.e.,  $X=X_1\cup X_2$, where $X_1$ and $X_2$ are connected. The subscheme $C:=X_1\cap X_2$ is contained in the non-normal locus $D$ of $X$ and is of codimension 1. We call $C$ the {\em connecting subscheme} of $X_1$ and $X_2$. 
We have the  Mayer-Vietoris exact sequence:
\begin{align*}
0\to \mathcal{L} \to  \mathcal{L}|_{X_1}\oplus\mathcal{L}|_{X_2}\to \mathcal{L}|_{C}\to 0.
\end{align*}
Taking the associated long exact sequence, we obtain
\begin{alignat}{3}
0 \to H^0(X,\mathcal{L}) \to H^0(X_1,\mathcal{L}|_{X_1})\oplus H^0(X_2,\mathcal{L}|_{X_2}) \xrightarrow{\phi} H^0(C,\mathcal{L}|_{C}).
\end{alignat}

The morphism $\phi$ is defined by $(\mathcal{R}_{X_1\to C}, -\mathcal{R}_{X_2\to C})$, where $\mathcal{R}_{X_i\to C}$ is the restriction map.
Denote $\dim\mathrm{im}\,\mathcal{R}_{X_i\to C}$ by $r_{X_i\to C}(\mathcal{L}|_{X_i})$ or $r_{X_i\to C}$ for simplicity.
We have
\begin{equation}\label{sectiongluing}
\begin{split}
h^0(X, \mathcal{L})&=
h^0(X_1,\mathcal{L}|_{X_1})+h^0(X_2,\mathcal{L}|_{X_2})-\dim\mathrm{im}\,\phi\\
&\le h^0(X_1,\mathcal{L}|_{X_1})+h^0(X_2,\mathcal{L}|_{X_2})-\max\{r_{X_1\to C},r_{X_2\to C}\}.
\end{split}
\end{equation}

\section{Fujita's $\Delta$-genus of polarized varieties}

In this section we recall some definitions and results of Fujita from \cite[Chapter 1]{Fuj90}. Note that the ground field $k$ is an algebraically closed field $k$ of characteristic $\ge 0$ in \cite{Fuj90}. 

Let $X$ be a variety over $k$ of dimension $n$ and $\mathcal{L}$ is ample invertible sheaf on it. 
Such a pair $(X,\mathcal{L})$ is called a {\bf polarized variety}.

\begin{definition}(\cite[(1.2.0)]{Fuj90})
An element $D\in|\mathcal{L}|$ is called a {\bf rung} of $(X,\mathcal{L})$ if  $D$ is reduced and irreducible as a subscheme of $X$.
\end{definition}
\begin{remark}
If $D$ is a rung of $(X,\mathcal{L})$, then $(D,\mathcal{L}|_D)$ is a polarized variety of dimension $n-1$. 
\end{remark}

Let $\chi(t\mathcal{L})$ be the Euler-Poincar\'e characteristic of $\mathcal{L}^t$, which is a polynomial in $t$ of degree $n$. We put
$$ \chi(t\mathcal{L})=\sum\limits^{n}_{j=0}\chi_j(X,\mathcal{L})\frac{t^{[j]}}{j!},
$$
where $t^{[j]}=t(t+1)...(t+j-1)$ for $j\ge1$ and $t^{[0]}=1$. 

\begin{definition}(\cite[(1.2.1)]{Fuj90})
	The {\bf degree} of $(X,\mathcal{L})$ is defined as $d(X,\mathcal{L}):=\chi_n(X,\mathcal{L})$, which equals $(\mathcal{L})^n$ by Riemann Roch theorem.
	
	The {\bf sectional genus } of $(X,\mathcal{L})$ is defined as $g(X,\mathcal{L}):=1-\chi_{n-1}(X,\mathcal{L})$.
\end{definition}

\begin{rmk}
If $\dim X=1$, $g(X,\mathcal{L})=h^1(X,\mathcal{O}_X)$ is just the arithmetic genus of the curve $X$.
If  $\dim X\ge2$ and  $X$ is non-singular, by Riemann Roch theorem we have $g(X,\mathcal{L})=(K_X+(n-1)\mathcal{L})\cdot (\mathcal{L})^{n-1}/2+1$.
\end{rmk}
\begin{prop}(\cite[(1.2.1)]{Fuj90})

Let $D$ be a rung of $(X,\mathcal{L})$. Then $$\chi_r(D,\mathcal{L}|_D)=\chi_{r+1}(X,\mathcal{L})  \quad \text{for}~r\ge0.$$

In particular, $g(D,\mathcal{L}|_D)=g(X,\mathcal{L})$.
\end{prop}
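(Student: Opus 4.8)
The plan is to relate $\chi(t\mathcal{L}|_D)$ to $\chi(t\mathcal{L})$ through the structure sequence of $D$ in $X$. Since $D$ is a member of the linear system $|\mathcal{L}|$, it is an effective Cartier divisor with $\mathcal{O}_X(-D)\cong\mathcal{L}^{-1}$; tensoring the sequence $0\to\mathcal{O}_X(-D)\to\mathcal{O}_X\to\mathcal{O}_D\to 0$ by $\mathcal{L}^t$ gives
\[
0\to\mathcal{L}^{t-1}\to\mathcal{L}^t\to\mathcal{L}^t|_D\to 0
\]
for every integer $t$. Additivity of the Euler--Poincar\'e characteristic then yields the polynomial identity
\[
\chi(t\mathcal{L}|_D)=\chi(t\mathcal{L})-\chi((t-1)\mathcal{L}).
\]

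Next I would expand both sides in the basis $\{\,t^{[j]}/j!\,\}_{j\ge 0}$ of the space of polynomials, using that $\chi(t\mathcal{L})$ is a polynomial in $t$ of degree $n=\dim X$ and $\chi(t\mathcal{L}|_D)$ one of degree $\dim D=n-1$. The only computational input is the elementary identity $t^{[j]}-(t-1)^{[j]}=j\,t^{[j-1]}$ for $j\ge 1$, obtained by factoring $t(t+1)\cdots(t+j-2)$ out of both terms. Substituting the defining expansion of $\chi(t\mathcal{L})$ gives
\[
\chi(t\mathcal{L})-\chi((t-1)\mathcal{L})=\sum_{j=1}^{n}\chi_j(X,\mathcal{L})\,\frac{j\,t^{[j-1]}}{j!}=\sum_{r=0}^{n-1}\chi_{r+1}(X,\mathcal{L})\,\frac{t^{[r]}}{r!}.
\]
Comparing this with $\chi(t\mathcal{L}|_D)=\sum_{r=0}^{n-1}\chi_r(D,\mathcal{L}|_D)\,t^{[r]}/r!$ and invoking the linear independence of the $t^{[r]}$ yields $\chi_r(D,\mathcal{L}|_D)=\chi_{r+1}(X,\mathcal{L})$ for all $r\ge 0$.

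For the ``in particular'' clause I would then specialize to $r=\dim D-1=n-2$, obtaining $\chi_{\dim D-1}(D,\mathcal{L}|_D)=\chi_{n-1}(X,\mathcal{L})$, whence $g(D,\mathcal{L}|_D)=1-\chi_{\dim D-1}(D,\mathcal{L}|_D)=1-\chi_{n-1}(X,\mathcal{L})=g(X,\mathcal{L})$ by the definition of the sectional genus.

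The argument is essentially formal once the structure sequence is in place, so there is no real obstacle; the only points requiring care are the degree/dimension shift — making sure the summation ranges and the index in the definition of $g$ line up — and the remark that $\chi(t\mathcal{L})$ is genuinely polynomial in $t$ (Snapper/Hilbert polynomial), which is what licenses the finite expansion in the $t^{[j]}$ and the coefficient comparison.
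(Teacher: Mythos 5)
Your argument is correct and is the standard one: the paper itself gives no proof of this proposition (it simply cites Fujita's Prop.~1.3), and your derivation via the sequence $0\to\mathcal{L}^{t-1}\to\mathcal{L}^{t}\to\mathcal{L}^{t}|_D\to 0$ together with the identity $t^{[j]}-(t-1)^{[j]}=j\,t^{[j-1]}$ is exactly Fujita's computation. The index bookkeeping in the ``in particular'' step ($r=\dim D-1=n-2$ giving $\chi_{n-2}(D,\mathcal{L}|_D)=\chi_{n-1}(X,\mathcal{L})$) is also handled correctly.
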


\begin{definition}(Fujita's $\Delta$-genus, cf. \cite[(1.2.2)]{Fuj90})\label{FujitaDeltagenus}
	
	The {\bf $\Delta$-genus} of a polarized variety $(X,\mathcal{L})$ is defined as $$\Delta(X, \mathcal{L}):=d(X,\mathcal{L})-h^0(X, \mathcal{L})+\dim X=(\mathcal{L})^{\dim X}-h^0(X, \mathcal{L})+\dim X.$$
\end{definition}

\begin{definition}(\cite[(1.3.1)]{Fuj90})
	
	A sequence $D_1\subset D_2\subset ... \subset D_n\subset X$ of subvarieties of $X$ is called a {\bf ladder} of $(X,\mathcal{L})$ if $D_{j}$ is a rung of $(D_{j+1},\mathcal{L}_{j+1})$ for each $j\ge1$, where $\mathcal{L}_{j+1}=\mathcal{L}|_{D_{j+1}}$.
\end{definition}

\begin{prop}
(\cite[Proposition (1.3.4)]{Fuj90})\label{sectionalgenus}

Let $(X,\mathcal{L})$ be a polarized variety with $g(X,\mathcal{L})=0$. Suppose that $(X,\mathcal{L})$ has a ladder $\{D_j\}$ such that each rung $D_j$ (including $X$ itself) is a normal variety. 

Then $\Delta(X,\mathcal{L})=0$.
\end{prop}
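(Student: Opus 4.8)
The plan is to argue by induction on $n=\dim X$, peeling off the top rung of the ladder at each stage. For the base case $n=1$, the only rung is $X$ itself, which is a normal — hence smooth — projective curve with $g(X)=g(X,\mathcal{L})=0$, so $X\cong\mathbb{P}^1$; putting $d=\deg\mathcal{L}=\mathcal{L}^1\ge 1$ gives $h^0(X,\mathcal{L})=d+1$ and $\Delta(X,\mathcal{L})=d-(d+1)+1=0$.

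For $n\ge 2$, let $D:=D_{n-1}$, a rung of $(X,\mathcal{L})$. By hypothesis $D$ is a normal variety, $D_1\subset\cdots\subset D_{n-1}$ is a ladder of $(D,\mathcal{L}|_D)$ all of whose rungs are normal, and by \cite[Prop.~1.3]{Fuj75} (recalled above) $g(D,\mathcal{L}|_D)=g(X,\mathcal{L})=0$; hence $\Delta(D,\mathcal{L}|_D)=0$ by the induction hypothesis. To compare this with $\Delta(X,\mathcal{L})$, note that $D\in|\mathcal{L}|$ gives $\mathcal{I}_D\cong\mathcal{L}^{-1}$, hence the exact sequence $0\to\mathcal{O}_X\to\mathcal{L}\to\mathcal{L}|_D\to 0$, while $\mathcal{L}^n=\mathcal{L}^{n-1}\cdot D=(\mathcal{L}|_D)^{n-1}$. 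Taking cohomology and using $h^0(\mathcal{O}_X)=1$ one obtains
\[
\Delta(X,\mathcal{L})=\Delta(D,\mathcal{L}|_D)+c,\qquad c:=\dim\operatorname{coker}\!\big(H^0(X,\mathcal{L})\to H^0(D,\mathcal{L}|_D)\big)\ge 0 .
\]
Thus the whole problem reduces to showing $c=0$, i.e.\ that the restriction $H^0(X,\mathcal{L})\to H^0(D,\mathcal{L}|_D)$ is onto; and since $c=\dim\ker\!\big(H^1(X,\mathcal{O}_X)\to H^1(X,\mathcal{L})\big)$, it suffices to prove $H^1(X,\mathcal{O}_X)=0$.

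This vanishing is the heart of the matter, and I expect its last step to be the main obstacle. I would climb the ladder: $H^1(D_1,\mathcal{O}_{D_1})=H^1(\mathbb{P}^1,\mathcal{O})=0$, and from $0\to\mathcal{L}^{-1}|_{D_{j+1}}\to\mathcal{O}_{D_{j+1}}\to\mathcal{O}_{D_j}\to 0$, using $H^0(\mathcal{O}_{D_{j+1}})\xrightarrow{\sim}H^0(\mathcal{O}_{D_j})$ and $H^1(\mathcal{O}_{D_j})=0$ inductively, one gets $H^1(D_{j+1},\mathcal{O}_{D_{j+1}})\cong H^1(D_{j+1},\mathcal{L}^{-1}|_{D_{j+1}})$; so the step $j\mapsto j+1$ reduces to a Kodaira-type vanishing $H^1(D_{j+1},\mathcal{L}^{-1}|_{D_{j+1}})=0$. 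For a proper rung ($\dim D_{j+1}<n$) this holds because the induction gives $\Delta(D_{j+1},\mathcal{L}|_{D_{j+1}})=0$, so $D_{j+1}$ is a variety of minimal degree, and such varieties (and the cones over them) have rational singularities, whence $H^i(D_{j+1},\mathcal{L}^{-1}|_{D_{j+1}})=0$ for $i<\dim D_{j+1}$; in particular this already yields $H^1(\mathcal{O}_{D_{n-1}})=0$. The one step the induction does \emph{not} cover is the passage from $D_{n-1}$ to $X$, where nothing is yet known about $X$ itself; closing it is the delicate point, and one has to bring in the finer part of Fujita's theory (normally generated polarizations), combined with $\Delta(X,\mathcal{L})\ge 0$ from Theorem~\ref{fujita} and the rigidity forced by having a normal ladder with $g(X,\mathcal{L})=0$. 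Once $H^1(X,\mathcal{O}_X)=0$, hence $c=0$, is established, $\Delta(X,\mathcal{L})=\Delta(D,\mathcal{L}|_D)=0$, completing the induction.
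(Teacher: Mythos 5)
The paper does not actually prove this proposition; it is quoted verbatim from Fujita's book (\cite[Prop.~3.4]{Fuj90}), so there is no in-paper argument to compare yours against. Judged on its own terms, your proposal correctly sets up the induction and the bookkeeping: the identity $\Delta(X,\mathcal{L})=\Delta(D,\mathcal{L}|_D)+c$ with $c=\dim\operatorname{coker}\bigl(H^0(X,\mathcal{L})\to H^0(D,\mathcal{L}|_D)\bigr)\ge 0$ is right, and the base case is fine. But this reduction only yields $\Delta(X,\mathcal{L})\ge\Delta(D_1,\mathcal{L}|_{D_1})=0$, i.e.\ it re-proves the inequality already contained in Theorem~\ref{fujita}; the entire content of the proposition is precisely that every corank $c$ vanishes, and in particular the top one. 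That is the step you explicitly leave open, so the proof is incomplete at exactly the point where the work lies.

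Two further concrete problems. First, the route you propose for closing the gap --- proving $H^1(X,\mathcal{O}_X)=0$ via $H^1(X,\mathcal{L}^{-1})=0$ --- is not available: a Kodaira-type vanishing $H^1(X,\mathcal{L}^{-1})=0$ is not known (and in general not true) for an arbitrary normal projective variety $X$; you would need rational or at least well-controlled singularities on $X$, which is exactly what the hypotheses do not give you at the top of the ladder. Second, for the intermediate rungs you invoke the fact that $\Delta(D_{j+1},\mathcal{L}|_{D_{j+1}})=0$ forces $D_{j+1}$ to be a variety of minimal degree with rational singularities and vanishing intermediate cohomology. That classification (del Pezzo--Bertini, very ampleness and simple generation of $\mathcal{L}$ when $\Delta=0$) is itself established in Fujita's theory \emph{by means of} this proposition and its companions, so leaning on it here is circular unless you re-derive it independently. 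In Fujita's actual development the missing surjectivity is obtained not from a vanishing theorem on $X$ but from the statement that a rung $D$ with $\Delta(D,\mathcal{L}|_D)=0$ has simply generated graded ring $\bigoplus_t H^0(D,t\mathcal{L}|_D)$, which combined with Serre vanishing for $t\gg 0$ forces $H^0(X,t\mathcal{L})\to H^0(D,t\mathcal{L}|_D)$ to be onto for all $t\ge 1$ (a ``regular ladder''). Without that ring-theoretic input, or some substitute for it, the induction does not close.
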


\begin{thm}(\cite[Theorem (1.3.5)]{Fuj90})\label{bpf}
Let $(X,\mathcal{L})$ be a polarized variety having a ladder.
Assume that $g(X,\mathcal{L})\ge \Delta(X,\mathcal{L})$.

Then
$\mathrm{Bs}|\mathcal{L}|=\emptyset$, if $d(X,\mathcal{L})  \ge 2\Delta(X,\mathcal{L})$.

\end{thm}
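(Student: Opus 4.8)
The plan is to run the classical base-point-free pencil trick / ladder descent argument of Fujita, reducing the statement to the curve case by cutting with general members of $|\mathcal{L}|$ and then doing a direct cohomological analysis on the bottom rung. First I would invoke the given ladder $D_1\subset D_2\subset\cdots\subset D_n=X$; by Proposition \ref{sectionalgenus}'s ancestor facts (the rung formulas $\chi_r(D,\mathcal{L}|_D)=\chi_{r+1}(X,\mathcal{L})$ and $g(D,\mathcal{L}|_D)=g(X,\mathcal{L})$), the sectional genus is preserved all the way down the ladder, and one checks that the $\Delta$-genus is non-increasing along a rung, so in particular $\Delta(D_j,\mathcal{L}|_{D_j})\le\Delta$ and $d_j:=(\mathcal{L}|_{D_j})^j$ stays $\ge 2\Delta$ at every step as long as the degree does not drop — and $(\mathcal{L}|_{D_{j-1}})^{j-1}=(\mathcal{L}|_{D_j})^j$ since $D_{j-1}\in|\mathcal{L}|_{D_j}|$. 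So the numerical hypotheses $g\ge\Delta$ and $d\ge 2\Delta$ propagate down to $D_1$, a polarized curve of degree $d\ge 2\Delta$ and genus $g$ with $h^0(D_1,\mathcal{L}|_{D_1})=d+1-g+\Delta_1$ where $\Delta_1=\Delta(D_1,\mathcal{L}|_{D_1})\le\Delta$; by Riemann–Roch on the curve, $\deg(\mathcal{L}|_{D_1})=d\ge 2g-2g+2\Delta\ge\ldots$ — more precisely, $d\ge 2\Delta\ge 2\Delta_1$ forces (using $h^0=d+1-g+\Delta_1$ and Clifford / Riemann–Roch) that $|\mathcal{L}|_{D_1}|$ is base-point-free on the curve $D_1$.

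Next I would climb back up the ladder. The key exact sequence at each stage is
\begin{equation*}
0\to \mathcal{O}_{D_j}\bigl((t-1)\mathcal{L}\bigr)\to \mathcal{O}_{D_j}(t\mathcal{L})\to \mathcal{O}_{D_{j-1}}(t\mathcal{L})\to 0,
\end{equation*}
arising because $D_{j-1}\in|\mathcal{L}|_{D_j}|$. Feeding in the inductive hypothesis that $|\mathcal{L}|_{D_{j-1}}|$ is base-point-free together with the surjectivity of the restriction maps $H^0(D_j,t\mathcal{L})\to H^0(D_{j-1},t\mathcal{L})$ — which I would extract from a vanishing statement $H^1(D_j,(t-1)\mathcal{L})=0$ for the relevant small $t$, or alternatively from counting sections using that $\Delta$ and $g$ are as assumed — one deduces $\mathrm{Bs}|\mathcal{L}|_{D_j}|\subseteq \mathrm{Bs}|\mathcal{L}|_{D_{j-1}}|\cup(\text{possible points forced onto }D_{j-1})$, and then a general-position choice of the rung $D_{j-1}$ inside $|\mathcal{L}|_{D_j}|$ shows the base locus of $|\mathcal{L}|_{D_j}|$ cannot be a positive-dimensional locus nor an isolated point off $D_{j-1}$, hence is empty. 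Iterating up to $j=n$ gives $\mathrm{Bs}|\mathcal{L}|=\emptyset$ on $X$.

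The main obstacle is the surjectivity of the restriction maps $H^0(D_j,t\mathcal{L})\to H^0(D_{j-1},t\mathcal{L})$, i.e. controlling $H^1(D_j,(t-1)\mathcal{L})$ for $t=0,1$: in the absence of a normality or Cohen–Macaulay hypothesis beyond what "variety having a ladder" supplies, one cannot simply quote Kawamata–Viehweg vanishing, so this must be done by hand via the Euler-characteristic bookkeeping encoded in the $\chi_j(X,\mathcal{L})$ and the definitions of $g$ and $\Delta$. Concretely, the inequality $d\ge 2\Delta$ is exactly what is needed to make the relevant $h^1$ vanish after the ladder descent, and the delicate point is to verify that the bound is preserved and sharp enough at each rung; the sectional-genus invariance (the Propositions quoted above) is what keeps $g$ fixed so that $g\ge\Delta$ survives. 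Once the curve case is secured and this vanishing is in hand, the climb back up is formal.
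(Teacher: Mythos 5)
The paper offers no proof of this statement at all: Theorem \ref{bpf} is quoted verbatim from Fujita's book (\cite[Thm 3.5]{Fuj90}) and used as a black box, so there is nothing internal to compare your argument against. Judged on its own, your sketch does identify the correct overall strategy (Fujita's ladder method: descend to the one-dimensional rung, settle base-point-freeness there, climb back up via the restriction maps), and the numerical bookkeeping you do is right --- the degree $d$ and the sectional genus $g$ are constant along the ladder, and $\Delta$ is non-increasing, so the hypotheses $g\ge\Delta$ and $d\ge 2\Delta$ do propagate down to $D_1$.

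The genuine gap is exactly the step you label ``the main obstacle'' and then do not carry out: the surjectivity of $H^0(D_j,\mathcal{L})\to H^0(D_{j-1},\mathcal{L})$. This is not a technical vanishing to be quoted or deferred; it is the substance of the theorem. Since the kernel of that restriction is $H^0(D_j,\mathcal{O}_{D_j})\cong\mathbb{C}$, surjectivity is \emph{equivalent} to $\Delta(D_{j-1},\mathcal{L}|_{D_{j-1}})=\Delta(D_j,\mathcal{L}|_{D_j})$, whereas your descent only gives ``$\le$''; if the inequality were strict at any rung, the climb back up would fail, and proving equality under $g\ge\Delta$ and $d\ge 2\Delta$ is precisely where Fujita spends his effort. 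Two further points: the curve-case formula $h^0(D_1,\mathcal{L}|_{D_1})=d+1-g+\Delta_1$ is wrong --- by the definition of the $\Delta$-genus it is $h^0=d+1-\Delta_1$, with $h^1(\mathcal{L}|_{D_1})=g-\Delta_1$ by Riemann--Roch --- and the base-point-freeness on $D_1$ from $d\ge 2\Delta$ is asserted rather than derived. Finally, the appeal to ``a general-position choice of the rung $D_{j-1}$'' is not available: the hypothesis only guarantees that \emph{some} ladder exists, and a general member of $|\mathcal{L}|_{D_j}|$ need not be reduced and irreducible; fortunately that appeal is also unnecessary, since once surjectivity and $\mathrm{Bs}|\mathcal{L}|_{D_{j-1}}|=\emptyset$ are in hand, every point of $D_j$ off $D_{j-1}$ is already outside the base locus because the section cutting out $D_{j-1}$ does not vanish there.
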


\begin{thm}(\cite[Theorem (1.4.2)]{Fuj90})\label{fujita}
	
	Let $(X, \mathcal{L})$ be a polarized variety, then  $\Delta(X, \mathcal{L})\ge \dim \mathrm{Bs}|\mathcal{L}|+1$, where $\mathrm{Bs}|\mathcal{L}|$ is the base locus of $|\mathcal{L}|$ and $\dim \emptyset$ is defined to be $-1$.
	
 	In particular, $$\Delta(X, \mathcal{L})\ge 0.$$ 
 	Moreover, $\mathrm{Bs}|\mathcal{L}|=\emptyset$ if $\Delta(X, \mathcal{L})=0$.
	
\end{thm}

\begin{thm}(\cite[Corollary (1.4.12)]{Fuj90})\label{veryample}
	
	Let $(X, \mathcal{L})$ be a polarized variety with $\Delta(X, \mathcal{L})= 0$.
	Then $\mathcal{L}$ is very ample, and $X$ is normal. 
\end{thm}
\begin{prop}(\cite[(1.4.13)]{Fuj90})\label{ladder}
	
	Let $(X, \mathcal{L})$ be a polarized variety with $\dim X=n$. 
	Suppose that $\dim \mathrm{im}\,\Phi_{|\mathcal{L}|}=n$, $B:=\mathrm{Bs}|L|$ is finite and $X$ has only Cohen Macaulay singularities at each point of $B$. Suppose in addition that $d(X, \mathcal{L})\ge 2\Delta(X, \mathcal{L})-1$ if $\mathrm{char}\,k>0$. 
	
	Then $(X, \mathcal{L})$ has a ladder. 
	
\end{prop}
\begin{definition}(\cite[(1.5.12) (5.13)]{Fuj90}, or \cite[Ex I.5.12]{Har77})
	
	Let $M$ be a subvariety of $\mathbb{P}$ and $L$ be a linear subspace of $\mathbb{P}$ such that $M\cap L=\emptyset$. 
	A {\bf generalized cone} over $M$ with axis $L$ is defined as 
	$$M*L:=\bigcup\limits_{x\in M,y\in L} x*y,$$ 
	where $x*y$ is a line passing through $x$ and $y$.
\end{definition}
\begin{thm}(\cite[Theorem (1.5.10), (1.5.15)]{Fuj90})\label{delta0}
	
	Let $(X, \mathcal{L})$ be a polarized variety with $\Delta(X, \mathcal{L})= 0$ and $n=\dim X\ge 2$. 
	
	If $X$ is smooth, then $(X, \mathcal{L})$ is isomorphic to 
	\begin{itemize}
		\item[1)] $\left(\mathbb{P}^{\mathrm{n}}, \mathcal{O}_{\mathbb{P}^{\mathrm{n}}}(1)\right)$,
		\item[2)] $\left(C, \mathcal{O}_{C}(1)\right)$, where $C$ is a hyperquadric in $\mathbb{P}^{\mathrm{n}+1}$  and $\mathcal{O}_{C}(1)=\mathcal{O}_{ \mathbb{P}^{\mathrm{n}+1}}(1)|_C$,
		\item[3)] $\left(\mathbb{P}(\mathcal{E}), \mathcal{O}(1)\right)$, where $\mathbb{P}(\mathcal{E})$ is the scroll of a vector bundle on $\mathbb{P}^1$ which is a direct sum of line bundles of positive degrees and $\mathcal{O}(1)$ is the tautological line bundle, or
		\item[4)] $\left(\mathbb{P}^2, \mathcal{O}_{\mathbb{P}^2}(2)\right)$ (the Veronese surface).
	\end{itemize}
	
	If $X$ is singular, then $(X, \mathcal{L})$ is a generalized cone over a smooth variety $M$ with $\Delta(M,\mathcal{L}_M)=0$. 
\end{thm}
\begin{remark}
	The four classes above in  the smooth case are disjoint except for the case $n=d(X, \mathcal{L})=2$ where type 2) and type 3) coincide.
\end{remark}

\begin{lemma}\label{bpf2}
Let $(X, \mathcal{L})$ be a normal polarized variety with $\dim X=2$ and $\Delta(X, \mathcal{L})=1$. If $|\mathcal{L}|$ is not composed with a pencil, i.e., $\dim \Phi_{|\mathcal{L}|}(X)=2$, then $|\mathcal{L}|$ is base-point-free.
\end{lemma}
\begin{proof}
Since $h^0(X,\mathcal{L})\ge 3$, we may assume $(\mathcal{L})^2\ge2$.
Note that $\mathrm{Bs}|\mathcal{L}|$ is finite since $1=\Delta(X, \mathcal{L})\ge \dim \mathrm{Bs}|\mathcal{L}|+1$ by Theorem \ref{fujita}. 
By Prop \ref{ladder}, $(X, \mathcal{L})$ has a ladder $D\in |\mathcal{L}|$. The sectional genus $g:=g(X, \mathcal{L})=g(D, \mathcal{L}|_D)=h^1(D,\mathcal{O}_D)\ge 0$.

First we show that $g\not=0$. Otherwise, $D$ is a smooth rational curve. By Prop \ref{sectionalgenus}, we have $\Delta(X, \mathcal{L})=0$, a contradiction.  
Therefore,  $g> 0$,  and then $|\mathcal{L}|$ is base-point-free by Thm \ref{bpf}.

\end{proof}

\section{reducible polarized demi-normal schemes}

Let $X$ be a connected scheme, $\mathcal{L}$  be an invertible sheaf on $X$, and $C$ is a reduced subscheme. 
Denote by $\mathcal{R}_{X\to C,\mathcal{L}}$ the restriction map $H^0(X,\mathcal{L})\to H^0(C,\mathcal{L}|_C)$ and $r_{X\to C}(\mathcal{L}):=\dim \mathrm{im} \mathcal{R}_{X\to C,\mathcal{L}}$. We also use $\mathcal{R}_{X\to C}$ to denote $\mathcal{R}_{X\to C,\mathcal{L}}$ for simplicity if $\mathcal{L}$ is clear from the context. 

\begin{lemma}\label{restsecions}
Let $X$ be a proper pure-dimensional connected $S_2$ scheme, let $\mathcal{L}$ be an invertible sheaf with $\dim \mathrm{Bs} |\mathcal{L}|<\dim X-1$ and let $C$ be a reduced subscheme of codimension one on $X$. 

Then
\begin{align*}
r_{X\to C}(\mathcal{L})=\dim <\Phi_{|\mathcal{L}|}(C)>+1,
\end{align*}
where $<\Phi_{|\mathcal{L}|}(C)>$ is the projective subspace of $|\mathcal{L}|^*$ spanned by $\Phi_{|\mathcal{L}|}(C)$.

\end{lemma}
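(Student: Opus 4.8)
The plan is to unwind the definitions on both sides and identify them with the same linear-algebraic object. Write $V = H^0(X,\mathcal{L})$, so $\mathbb{P} := |\mathcal{L}|^* = \mathbb{P}(V^*)$ and the rational map $\Phi_{|\mathcal{L}|}\colon X \dashrightarrow \mathbb{P}$ is given by evaluation of sections. The restriction map $\mathcal{R}_{X\to C}\colon H^0(X,\mathcal{L}) \to H^0(C,\mathcal{L}|_C)$ has image of dimension $r_{X\to C}(\mathcal{L})$ by definition; equivalently, its kernel is $W := H^0(X, \mathcal{L}\otimes\mathcal{I}_C)$, the sections vanishing on $C$, so $r_{X\to C}(\mathcal{L}) = \dim V - \dim W$. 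On the geometric side, the linear span $\langle \Phi_{|\mathcal{L}|}(C)\rangle \subset \mathbb{P}(V^*)$ is by definition the smallest projective linear subspace containing the image; a hyperplane $\{s = 0\}$ for $s \in V$ contains $\Phi_{|\mathcal{L}|}(C)$ precisely when $s$ vanishes at every point of $\Phi_{|\mathcal{L}|}(C)$, i.e. (once base-point issues are handled) when $s|_C = 0$. So the hyperplanes through $\langle \Phi_{|\mathcal{L}|}(C)\rangle$ form exactly the subspace $\mathbb{P}(W^{\perp})$ — wait, more carefully: $\langle\Phi_{|\mathcal{L}|}(C)\rangle$ is cut out as an intersection of hyperplanes, and a hyperplane $H_s$ contains it iff $s \in W$. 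Hence $\operatorname{codim}\langle\Phi_{|\mathcal{L}|}(C)\rangle = \dim W$ inside $\mathbb{P}(V^*)$, which has dimension $\dim V - 1$, giving $\dim\langle\Phi_{|\mathcal{L}|}(C)\rangle + 1 = (\dim V - 1 - \dim W) + 1 = \dim V - \dim W = r_{X\to C}(\mathcal{L})$.

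The genuine content — and the reason the hypotheses $S_2$, pure dimension, and $\dim\mathrm{Bs}|\mathcal{L}| < \dim X - 1$ are imposed — lies in justifying the step ``$s$ vanishes at every point of $\Phi_{|\mathcal{L}|}(C)$ iff $s|_C = 0$.'' The rational map $\Phi_{|\mathcal{L}|}$ is only defined away from $\mathrm{Bs}|\mathcal{L}|$, so $\Phi_{|\mathcal{L}|}(C)$ literally means the closure of $\Phi_{|\mathcal{L}|}(C \setminus \mathrm{Bs}|\mathcal{L}|)$. Since $C$ is reduced of pure codimension $1$ in $X$ (so every component has dimension $\dim X - 1$) and $\dim \mathrm{Bs}|\mathcal{L}| < \dim X - 1$, the base locus meets no component of $C$ in its generic point; thus $C\setminus\mathrm{Bs}|\mathcal{L}|$ is dense in $C$. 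A section $s \in V$ with $s|_{C\setminus\mathrm{Bs}|\mathcal{L}|} = 0$ then has $s|_C$ supported in a subscheme of dimension $< \dim X - 1$; here the $S_2$ condition enters: on an $S_2$ scheme (hence $S_2$ along $C$ after restriction — one should check $C$ inherits enough, or argue on $X$ directly) a section of an invertible sheaf cannot vanish on a dense open without vanishing identically, because $\mathcal{O}_C \hookrightarrow$ has no embedded components in low codimension to ``hide'' a nonzero section supported there. Concretely: $s|_C$ is a section of the line bundle $\mathcal{L}|_C$ on $C$; its zero scheme, if proper, is a Cartier divisor, hence pure of codimension $1$ in $C$ — it cannot be all of a dense open set unless $s|_C \equiv 0$. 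I would phrase this as: the locus where $s|_C$ vanishes is either all of $C$ or a Cartier divisor on $C$; since it contains the dense open $C\setminus\mathrm{Bs}|\mathcal{L}|$, it must be all of $C$.

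So the key steps, in order: (1) translate $r_{X\to C}(\mathcal{L}) = \dim H^0(X,\mathcal{L}) - \dim H^0(X,\mathcal{L}\otimes\mathcal{I}_C)$; (2) identify $\dim\langle\Phi_{|\mathcal{L}|}(C)\rangle + 1$ with $\dim H^0(X,\mathcal{L})$ minus the dimension of the space of sections whose hyperplanes contain $\Phi_{|\mathcal{L}|}(C)$; (3) show that space of sections equals $H^0(X,\mathcal{L}\otimes\mathcal{I}_C)$ — this is the crux, requiring the density of $C\setminus\mathrm{Bs}|\mathcal{L}|$ in $C$ (from the codimension bound on $\mathrm{Bs}|\mathcal{L}|$ together with purity of $C$) plus the fact that a section of a line bundle on the $S_2$ scheme $C$ vanishing on a dense open vanishes identically; (4) conclude by matching dimensions. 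The main obstacle is step (3), specifically handling the base locus and making the passage from ``vanishes on a dense open subset of $C$'' to ``vanishes on $C$'' rigorous — this is exactly where all three hypotheses of the lemma are used, and it is worth isolating as the one place the argument is not pure linear algebra.
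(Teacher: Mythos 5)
Your argument is correct and is essentially the paper's proof unwound into explicit linear algebra: the paper packages your steps (1)--(4) into a single commutative square comparing $\mathcal{R}_{X\to C}$ with the restriction of linear forms $\mathcal{R}_{\mathbb{P}\to \Phi_{|\mathcal{L}|}(C)}$, using that $\Phi_{|\mathcal{L}|}^*$ identifies $H^0(\mathbb{P},\mathcal{O}_{\mathbb{P}}(1))$ with $H^0(X,\mathcal{L})$ and injects $H^0(\Phi_{|\mathcal{L}|}(C),\mathcal{O}_{\mathbb{P}}(1)|_{\Phi_{|\mathcal{L}|}(C)})$ into $H^0(C,\mathcal{L}|_C)$. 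Your crux step (3) --- a section restricts to zero on $C$ if and only if the corresponding hyperplane contains $\Phi_{|\mathcal{L}|}(C)$, proved via density of $C\setminus \mathrm{Bs}|\mathcal{L}|$ in the reduced scheme $C$ --- is precisely the commutativity plus the injectivity of the right-hand vertical arrow, which the paper asserts without the justification you supply.
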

\begin{proof}
Denote $\mathbb{P}:=|\mathcal{L}|^*$. Since $\dim \mathrm{Bs} |\mathcal{L}|<\dim X-1=\dim C$,  $\Phi_{|\mathcal{L}|}$ is well-defined on $C^{\circ}:=C-\mathrm{Bs} |\mathcal{L}|$. Thus, $\Phi_{|\mathcal{L}|}(C):=\overline{\Phi_{|\mathcal{L}|}(C^{\circ})}$ is well-defined.
We have the following commutative diagram:
\[ \xymatrix{
      H^0(\mathbb{P},\mathcal{O}_{\mathbb{P}}(1))\ar[rr]^>>>>>>>>>{\mathcal{R}_{\mathbb{P}\to \Phi_{|\mathcal{L}|}(C)}}\ar[d]^{\cong}_{\Phi_{|\mathcal{L}|}^*} & & H^0(\Phi_{|\mathcal{L}|}(C),\mathcal{O}_{\mathbb{P}}(1)|_{\Phi_{|\mathcal{L}|}(C)})\ar@{^{(}->}[d]_{\Phi_{|\mathcal{L}|}^*}  \\
    H^0(X,\mathcal{L})\ar[rr]^{\mathcal{R}_{X\to C}}& & H^0(C,\mathcal{L}|_C)
   }.
\]
Therefore $r_{X\to C}(\mathcal{L})=r_{\mathbb{P}\to \Phi_{|\mathcal{L}|}(C)}(\mathcal{O}_{\mathbb{P}}(1))=\dim <\Phi_{|\mathcal{L}|}(C)>+1$.
\end{proof}

\begin{thm}\label{normal&divisor}
Let $(X, \mathcal{L})$ be a normal polarized variety and let $C$ be a reduced subscheme of codimension 1.

Then \[\Delta(X, \mathcal{L})+r_{X\to C}(\mathcal{L})-\dim X\ge 0.\]
\end{thm}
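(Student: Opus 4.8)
The plan is to reduce to a lower-dimensional statement by cutting with a general member of $|\mathcal{L}|$, using the induction on $\dim X$ together with Fujita's inequality $\Delta(X,\mathcal{L})\ge \dim \mathrm{Bs}|\mathcal{L}|+1$ (Theorem~\ref{fujita}) to control the base locus. First I would dispose of the degenerate cases. If $C$ is not contained in the base locus $\mathrm{Bs}|\mathcal{L}|$, then $\Phi_{|\mathcal{L}|}(C)$ spans a positive-dimensional linear subspace and one can try to estimate $r_{X\to C}(\mathcal{L})=\dim\langle \Phi_{|\mathcal{L}|}(C)\rangle+1$ from Lemma~\ref{restsecions}. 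If on the other hand $C\subseteq \mathrm{Bs}|\mathcal{L}|$, then $\dim \mathrm{Bs}|\mathcal{L}|\ge \dim X-1$, so Fujita's refined inequality already gives $\Delta(X,\mathcal{L})\ge \dim X$, and since $r_{X\to C}\ge 0$ the claimed inequality $\Delta(X,\mathcal{L})+r_{X\to C}(\mathcal{L})-\dim X\ge 0$ holds trivially. Also if $\Delta(X,\mathcal{L})\ge \dim X$ we are done immediately, so we may assume $\Delta(X,\mathcal{L})\le \dim X-1$, hence $\dim\mathrm{Bs}|\mathcal{L}|\le \Delta(X,\mathcal{L})-1\le \dim X-2$; in particular Lemma~\ref{restsecions} applies.

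With $\dim \mathrm{Bs}|\mathcal{L}|<\dim X-1$, by Fujita's ladder theory (Theorem~\ref{fujita} and the Bertini-type arguments underlying the notion of a rung) a general $D\in|\mathcal{L}|$ is a reduced irreducible normal divisor, giving a ladder of $(X,\mathcal{L})$, and $\Delta(D,\mathcal{L}|_D)=\Delta(X,\mathcal{L})$ since the $\Delta$-genus is constant along a ladder. Set $C':=C\cap D$, a reduced subscheme of $D$ of codimension $1$ in $D$ for general $D$ (using that $C$ has codimension $1$ in $X$ and $D$ moves enough to meet each component of $C$ properly, away from the non-normal locus). By induction on dimension applied to $(D,\mathcal{L}|_D)$ and $C'$ we get
\begin{equation}\label{eq:ind}
\Delta(X,\mathcal{L})+r_{D\to C'}(\mathcal{L}|_D)-\dim D\ge 0.
\end{equation}
It therefore suffices to compare $r_{D\to C'}(\mathcal{L}|_D)$ with $r_{X\to C}(\mathcal{L})$: concretely, I want $r_{X\to C}(\mathcal{L})\ge r_{D\to C'}(\mathcal{L}|_D)$, because then \eqref{eq:ind} combined with $\dim D=\dim X-1$ yields $\Delta(X,\mathcal{L})+r_{X\to C}(\mathcal{L})-\dim X\ge \Delta(X,\mathcal{L})+r_{D\to C'}(\mathcal{L}|_D)-\dim D-1\ge -1$, which is slightly too weak — so in fact I need the sharper comparison $r_{X\to C}(\mathcal{L})\ge r_{D\to C'}(\mathcal{L}|_D)+1$. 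This extra $+1$ should come from the geometry of the linear system: the image $\Phi_{|\mathcal{L}|}(D)$ is a hyperplane section of $\Phi_{|\mathcal{L}|}(X)$, and $\Phi_{|\mathcal{L}|}(C)$ spans at least one more dimension than $\Phi_{|\mathcal{L}|}(C')=\Phi_{|\mathcal{L}|}(C\cap D)$ because $C$ is not contained in the hyperplane cut out by $D$ (a general such hyperplane), so $\langle\Phi_{|\mathcal{L}|}(C)\rangle$ strictly contains $\langle\Phi_{|\mathcal{L}|}(C')\rangle$; applying Lemma~\ref{restsecions} on both $X$ and $D$ then gives exactly $r_{X\to C}(\mathcal{L})\ge r_{D\to C'}(\mathcal{L}|_D)+1$.

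The base case $\dim X=1$ is direct: a reduced codimension-$1$ subscheme $C$ of a curve is a nonempty finite set of reduced points, $r_{X\to C}(\mathcal{L})\ge 1$, and $\Delta(X,\mathcal{L})\ge 0$ by Fujita, so $\Delta(X,\mathcal{L})+r_{X\to C}(\mathcal{L})-1\ge 0$. The main obstacle I anticipate is the dimension-counting step establishing $r_{X\to C}(\mathcal{L})\ge r_{D\to C'}(\mathcal{L}|_D)+1$: one must be careful that $D$ is general enough that (i) $D$ is normal and gives an honest rung, (ii) $C\cap D$ has the expected codimension in $D$ and stays reduced, and (iii) no component of $C$ is swallowed into the hyperplane section, so that the span genuinely jumps by one. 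A secondary subtlety is that $\Phi_{|\mathcal{L}|}$ need not be a morphism, only a rational map, but since $\dim\mathrm{Bs}|\mathcal{L}|<\dim X-1$ the indeterminacy locus meets $C$ (of codimension $1$) in a proper closed subset, which is harmless for computing the span of $\Phi_{|\mathcal{L}|}(C)$, and Lemma~\ref{restsecions} is stated precisely under this hypothesis.
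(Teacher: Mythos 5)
Your plan --- induction on dimension by cutting with a general $D\in|\mathcal{L}|$ --- is genuinely different from the paper's argument, which never takes a hyperplane section: the paper resolves the base locus, maps $\tilde X$ onto the image $W$ of the moving part of $|\mathcal{L}|$, bounds $\Delta(X,\mathcal{L})\ge\Delta(W,\mathcal{O}_W(1))+\dim X-\dim W$, and then shows $r_{X\to C}(\mathcal{L})\ge\dim W$ because $\Phi_{|\mathcal{L}|}(C)$ has dimension at least $\dim W-1$. The first gap in your route is the existence of the ladder. When $\Phi_{|\mathcal{L}|}$ maps $X$ onto a variety of dimension strictly less than $\dim X$ (e.g.\ $|\mathcal{L}|$ composed with a pencil on a surface), a general member of $|\mathcal{L}|$ is a union of several fibres and is not irreducible; and even when it is irreducible and reduced it need not be normal, so the induction hypothesis (stated for normal polarized varieties) need not apply to $(D,\mathcal{L}|_D)$. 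This is precisely why Fujita's Theorem~3.5 (quoted as Thm~\ref{bpf}) carries ``having a ladder'' as a standing hypothesis rather than a consequence, and why the paper routes the argument through $W$ instead of through a rung.

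The second and more serious gap is the key comparison $r_{X\to C}(\mathcal{L})\ge r_{D\to C'}(\mathcal{L}|_D)+1$. By Lemma~\ref{restsecions}, $r_{D\to C'}(\mathcal{L}|_D)$ is computed from the span of the image of $C'$ under the map defined by the \emph{complete} linear system $|\mathcal{L}|_D|$ on $D$, i.e.\ it is the rank of $H^0(D,\mathcal{L}|_D)\to H^0(C',\mathcal{L}|_{C'})$. Your hyperplane-section argument only controls the span of $\Phi_{|\mathcal{L}|}(C')$ inside $\mathbb{P}(H^0(X,\mathcal{L})^\vee)$, that is, the rank of the composite $H^0(X,\mathcal{L})\to H^0(D,\mathcal{L}|_D)\to H^0(C',\mathcal{L}|_{C'})$, which can be strictly smaller whenever the restriction $H^0(X,\mathcal{L})\to H^0(D,\mathcal{L}|_D)$ is not surjective (a non-regular ladder). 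In that situation $r_{D\to C'}(\mathcal{L}|_D)$ can exceed $\dim\langle\Phi_{|\mathcal{L}|}(C')\rangle+1$ and the inequality you need does not follow. (Relatedly, your assertion $\Delta(D,\mathcal{L}|_D)=\Delta(X,\mathcal{L})$ holds only for regular ladders; in general one has $\Delta(D,\mathcal{L}|_D)\le\Delta(X,\mathcal{L})$, with the gap equal to the defect $h^0(D,\mathcal{L}|_D)-\dim\mathrm{im}\,\mathcal{R}_{X\to D}$ --- that slip happens to point in the favorable direction, but it is the same defect that breaks the $r$-comparison, and a correct induction of this shape would have to track it explicitly.)
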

\begin{proof}
Denote $n:=\dim X$. The case $n=1$ is trivial, so we may assume $n\ge 2$. 

We only need to consider the case $\Delta(X, \mathcal{L})<n$. 
Therefore we may assume $\dim \mathrm{Bs}|\mathcal{L}|\le n-2$ by Thm \ref{fujita}.
We may assume further $h^0(X,\mathcal{L})\ge2$ (otherwise, $\Delta(X, \mathcal{L})+r_{X\to C}(\mathcal{L})-\dim X\ge\Delta(X, \mathcal{L})-n=(\mathcal{L})^n-h^0(X,\mathcal{L})\ge 0$).

Let $\pi:\tilde{X}\rightarrow X$ be a composition of minimal resolution of singularities and blowups such that $\pi^*|\mathcal{L}|= |M|+F$, where $F$ is the fixed part and $|M|$ is the movable part which is base-point-free.

Let $W$ be the image of $\tilde{X}$ under $\Phi_{|M|}$ and $\phi$ be the induced map of $\Phi_{|M|}$ onto $W$. We have the following commutative diagram:
\[ \xymatrix{
    \tilde{X} \ar[r]^{\pi}\ar[d]_{\phi}\ar[dr]^{\Phi_{|M|}} & X \ar@{-->}[d]^{\Phi_{|\mathcal{L}|}}\\
W\ar@{^{(}->}[r]& \mathbb{P}^{h^0(X,\mathcal{L})-1}.
   }
\]

We claim that $(\mathcal{L})^n\ge \deg  W$.
First we have $\pi_*(F\cdot \pi^*\mathcal{L})=\pi_*(F)\cdot\mathcal{L}=0$, 
since $\pi(F)=\mathrm{Bs}|\mathcal{L}|$ is of codimension at least 2.
If $\dim W=\dim X=n$, $(\mathcal{L})^n=(\pi^*\mathcal{L})^n=M^n+M^{n-1}\cdot F\ge M^n=\deg\phi \,\, \deg W
\ge \deg  W$. If $m:=\dim W<\dim X$, $(\mathcal{L})^n=M^m\cdot(\pi^*\mathcal{L})^{n-m}=(\deg W \cdot Z)\cdot (\pi^*\mathcal{L})^{n-m}\ge \deg W$, where $Z$ is a general fiber of $\phi$. Hence the claim is true.

Therefore, $\Delta(X, \mathcal{L})=(\mathcal{L})^n-h^0(X,\mathcal{L})+n\ge \deg  W-h^0(W,\mathcal{O}_W(1))+n=\Delta(W,\mathcal{O}_W(1))+n-\dim  W$.
Hence $\Delta(X, \mathcal{L})+r_{X\to C}(\mathcal{L})-n\ge \Delta(W,\mathcal{O}_W(1))+r_{X\to C}(\mathcal{L})-\dim  W$.
Next we show that $r_{X\to C}(\mathcal{L})\ge \dim  W$. 

We claim that the image $\Phi_{|\mathcal{L}|}(C)$ of $C$ is of codimension  equal or less than 1 in $W$.
For the case $n=2$ and $|\mathcal{L}|$ is composed with a pencil, i.e., $\dim \Phi_{|\mathcal{L}|}(X)=1$, the claim is trivial. For the case $n=2$ and $|\mathcal{L}|$ is not composed with a pencil, $|\mathcal{L}|$ is base-point-free by Lemma \ref{bpf2}. Hence $\Phi_{|\mathcal{L}|}$ is a morphism and it contracts no curve as $\mathcal{L}$ is ample. Therefore $\Phi_{|\mathcal{L}|}(C)$ is of codimension 1 in $W$.
For the case $n\ge3$, suppose for a contradiction that $\Phi_{|\mathcal{L}|}(C)$ is of codimension greater than 1 in $W$.  Thus we have $\Phi_{|\mathcal{L}|*}C=0$ and then $\phi_*(\bar C)=0$, where $\bar C$ is the strict transformation of $C$ in $\tilde{X}$. 
We would have $0< C\cdot(\mathcal{L})^{n-1}=\bar C \cdot(\pi^*\mathcal{L})^{n-1}=\bar{C}\cdot M\cdot (\pi^*\mathcal{L})^{n-2}+\bar{C}\cdot F\cdot (\pi^*\mathcal{L})^{n-2}=0$, a contradiction. Thus we have proven the claim.


By Lemma \ref{restsecions} we have $r_{X\to C}(\mathcal{L})=\dim \langle\Phi_{|\mathcal{L}|}(C)\rangle +1$. It is obvious that $\dim\langle\Phi_{|\mathcal{L}|}(C)\rangle\ge \dim \Phi_{|\mathcal{L}|}(C)$. Therefore $r_{X\to C}(\mathcal{L})\ge \dim \Phi_{|\mathcal{L}|}(C)+1\ge \dim  W$.


We conclude that $\Delta(X, \mathcal{L})+r_{X\to C}(\mathcal{L})-n\ge \Delta(W,\mathcal{O}_W(1))\ge0$.
\end{proof}
\begin{cor}\label{delta0gluing}
Let $(X,\mathcal{L})$ and $C$ be as in Thm \ref{normal&divisor}. Assume further $\Delta(X,\mathcal{L})=0$ and $r_{X\to C}(\mathcal{L})=\dim X$.
Then $C$ is a hyperplane of $X$ with respect to $\mathcal{L}$.
\end{cor}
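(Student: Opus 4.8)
The plan is to reduce the statement to an elementary comparison of $C$ with its linear span, after first upgrading the hypothesis $\Delta(X,\mathcal{L})=0$ to very ampleness of $\mathcal{L}$. Since $\Delta(X,\mathcal{L})=0$, Theorem~\ref{fujita} gives $\dim\mathrm{Bs}|\mathcal{L}|\le\Delta(X,\mathcal{L})-1=-1$, so $|\mathcal{L}|$ is base point free; moreover, by Fujita's structure theorem for polarized varieties with $\Delta$-genus zero (\cite{Fuj75}), $\mathcal{L}$ is in fact very ample. Thus $\Phi_{|\mathcal{L}|}\colon X\hookrightarrow\mathbb{P}:=|\mathcal{L}|^{*}$ is a closed embedding, and from now on I regard $X$, and hence $C$, as closed subschemes of $\mathbb{P}$, so that $\mathcal{O}_{\mathbb{P}}(1)|_X=\mathcal{L}$ and $\langle C\rangle=\langle\Phi_{|\mathcal{L}|}(C)\rangle$.

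Write $n:=\dim X$. As $X$ is normal it is $S_2$ and of pure dimension $n$, and $\dim\mathrm{Bs}|\mathcal{L}|<n-1$, so Lemma~\ref{restsecions} applies and yields
\[
\dim\langle C\rangle=\dim\langle\Phi_{|\mathcal{L}|}(C)\rangle=r_{X\to C}(\mathcal{L})-1=n-1 .
\]
Hence $\Lambda:=\langle C\rangle$ is a linear subspace of $\mathbb{P}$ with $\Lambda\cong\mathbb{P}^{n-1}$, and by definition of the span $C\subseteq\Lambda$.

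It then remains to compare $C$ with $\Lambda$ dimensionally. Since $C$ is reduced of codimension $1$ in $X$ we have $\dim C=n-1$, so some irreducible component $C_0$ of $C$ has dimension $n-1$; being a closed irreducible subset of the irreducible $(n-1)$-dimensional variety $\Lambda$, $C_0$ must equal $\Lambda$ as a set. Therefore $\Lambda\subseteq C\subseteq\Lambda$, so $C=\Lambda$ as sets, and as both are reduced, $C=\Lambda$ as schemes. Consequently $C\cong\mathbb{P}^{n-1}$ with $\mathcal{L}|_C=\mathcal{O}_{\mathbb{P}}(1)|_{\Lambda}\cong\mathcal{O}_{\mathbb{P}^{n-1}}(1)$, that is, $C$ is a hyperplane of $X$.

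The only input not already quoted in the text is the very ampleness of $\mathcal{L}$ when $\Delta(X,\mathcal{L})=0$, and this is the step where some care is needed, since Theorem~\ref{fujita} by itself controls only $\mathrm{Bs}|\mathcal{L}|$. If one prefers not to invoke Fujita's classification, one can instead re-run the proof of Theorem~\ref{normal&divisor} under the present hypotheses: there every inequality is forced to be an equality, which makes $\Phi_{|\mathcal{L}|}\colon X\to W$ finite and birational onto a variety $W$ of minimal degree, hence an isomorphism (varieties of minimal degree being normal), and the argument above then applies verbatim. Either way, once the embedding is in hand the content of the corollary is just the observation that a reduced codimension-$1$ subscheme of $X$ that spans only a $\mathbb{P}^{n-1}$ in $\mathbb{P}$ must coincide with that $\mathbb{P}^{n-1}$; I expect this to be the conceptual core, but it is entirely routine to carry out.
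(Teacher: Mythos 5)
Your proof is correct and takes essentially the same route as the paper's: very ampleness of $\mathcal{L}$ from $\Delta(X,\mathcal{L})=0$, Lemma~\ref{restsecions} to show $\langle C\rangle\cong\mathbb{P}^{\dim X-1}$, and the dimension count forcing the reduced $(\dim X-1)$-dimensional $C$ to coincide with its span. You in fact justify the very ampleness step (via Fujita's classification of $\Delta$-genus zero varieties) more carefully than the paper, which simply asserts it.
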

\begin{proof}
First $\Delta(X,\mathcal{L})=0$ implies that $\mathcal{L}$ is very ample by Thm \ref{veryample}. As a result,  $\mathcal{L}|_C$ is very ample as well. 
By Lemma \ref{restsecions}, $r_{X\to C}(\mathcal{L})=\dim X$ implies that 
$\dim \langle\Phi_{|\mathcal{L}|}(C)\rangle=\dim X-1$. 
Therefore, $\langle \Phi_{|\mathcal{L}|}(C)\rangle\cong \mathbb{P}^{\dim X-1}\subset \mathbb{P}^{h^0(X,\mathcal{L})-1}$. 
However, since $\Phi_{|\mathcal{L}|}$ is an embedding, $\dim \Phi_{|\mathcal{L}|}(C)= \dim C=\dim X-1$.
Therefore, $\dim \Phi_{|\mathcal{L}|}(C)=\dim \langle\Phi_{|\mathcal{L}|}(C)\rangle$, 
which implies that $\Phi_{|\mathcal{L}|}(C)=\langle \Phi_{|\mathcal{L}|}(C)\rangle\cong \mathbb{P}^{\dim X-1}$. 
Therefore, $C\cong \Phi_{|\mathcal{L}|}(C)\cong \mathbb{P}^{\dim X-1}$ and $\mathcal{L}|_C\cong \mathcal{O}_{\mathbb{P}^{\dim X-1}}(1)$. 
Hence, $C$ is a hyperplane of $X$ with respect to $\mathcal{L}$.
\end{proof}
\begin{thm}\label{deminormal&divisor}
Let $(X, \mathcal{L})$ be an irreducible polarized non-normal variety.  Then \[\Delta(X, \mathcal{L})\ge 1.\]
Moreover, if $C$ is a reduced subscheme of codimension 1 which does not contain any component of the non-normal locus, 
then we have \[\Delta(X, \mathcal{L})+r_{X\to C}(\mathcal{L})-\dim X\ge 0.\]
\end{thm}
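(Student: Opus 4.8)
The strategy is to reduce the non-normal irreducible case to the normal case via the normalization map $\pi\colon\bar X\to X$, which is finite and birational, with non-normal locus $D\subset X$ and conductor $\bar D\subset\bar X$. Since $\pi$ is finite and $\mathcal L$ is ample, $\bar{\mathcal L}:=\pi^*\mathcal L$ is an ample invertible sheaf on the normal variety $\bar X$, so Theorem \ref{fujita} gives $\Delta(\bar X,\bar{\mathcal L})\ge 0$. The key numerical point is that $\dim X=\dim\bar X=:n$ and $\bar{\mathcal L}^n=\mathcal L^n$, so $\Delta(X,\mathcal L)$ and $\Delta(\bar X,\bar{\mathcal L})$ differ only through the cohomological terms $h^0$. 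Pulling back sections gives an injection $H^0(X,\mathcal L)\hookrightarrow H^0(\bar X,\bar{\mathcal L})$, so $h^0(X,\mathcal L)\le h^0(\bar X,\bar{\mathcal L})$, which unfortunately points the \emph{wrong} way for a naive comparison. So the real content is to control the cokernel of $H^0(X,\mathcal L)\to H^0(\bar X,\bar{\mathcal L})$, equivalently the gluing data along the conductor.

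First I would set up the conductor square: there is an exact sequence
\begin{align*}
0\to \mathcal O_X\to \pi_*\mathcal O_{\bar X}\to \pi_*\mathcal O_{\bar D}/\mathcal O_D\to 0,
\end{align*}
and tensoring with $\mathcal L$ (which is locally free, so this stays exact) and taking cohomology gives
\begin{align*}
0\to H^0(X,\mathcal L)\to H^0(\bar X,\bar{\mathcal L})\to H^0(\bar D,\bar{\mathcal L}|_{\bar D})/H^0(D,\mathcal L|_D).
\end{align*}
Hence $h^0(X,\mathcal L)\ge h^0(\bar X,\bar{\mathcal L})-\bigl(h^0(\bar D,\bar{\mathcal L}|_{\bar D})-h^0(D,\mathcal L|_D)\bigr)$. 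Since $X$ is demi-normal it is generically a double point along $D$, so $\pi|_{\bar D}\colon\bar D\to D$ is generically two-to-one; the map $H^0(D,\mathcal L|_D)\to H^0(\bar D,\bar{\mathcal L}|_{\bar D})$ is injective, and I want to bound the dimension of the quotient. The cleanest way is to observe $r_{\bar X\to\bar D}(\bar{\mathcal L})=\dim\langle\Phi_{|\bar{\mathcal L}|}(\bar D)\rangle+1$ by Lemma \ref{restsecions} (after discarding a base locus of codimension $\ge 2$, using the moreover part of Theorem \ref{fujita}: if $\Delta(\bar X,\bar{\mathcal L})<n$ then $\dim\mathrm{Bs}|\bar{\mathcal L}|\le n-2$, and the boundary case $\Delta\ge n$ is handled separately as in the proof of Theorem \ref{normal&divisor}), so that $h^0(\bar D,\bar{\mathcal L}|_{\bar D})\ge r_{\bar X\to\bar D}(\bar{\mathcal L})$ and, more usefully, the image of $H^0(\bar X,\bar{\mathcal L})$ in $H^0(\bar D,\bar{\mathcal L}|_{\bar D})$ has dimension exactly $r_{\bar X\to\bar D}$. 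This lets me replace $h^0(\bar D,\bar{\mathcal L}|_{\bar D})$ in the estimate by $r_{\bar X\to\bar D}(\bar{\mathcal L})$ when computing the actual cokernel of $H^0(X,\mathcal L)\to H^0(\bar X,\bar{\mathcal L})$.

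Putting this together, $\Delta(X,\mathcal L)=\mathcal L^n-h^0(X,\mathcal L)+n\ge \bar{\mathcal L}^n-h^0(\bar X,\bar{\mathcal L})+n+\bigl(\text{corank term}\bigr)=\Delta(\bar X,\bar{\mathcal L})+(\text{corank term})$, and I apply Theorem \ref{normal&divisor} to $\bar X$ with the codimension-one subscheme $\bar D$: it yields $\Delta(\bar X,\bar{\mathcal L})+r_{\bar X\to\bar D}(\bar{\mathcal L})-n\ge 0$, which is precisely what is needed to absorb the corank term and conclude $\Delta(X,\mathcal L)\ge 0$. For the "moreover" part with a subscheme $C$ of codimension one not containing the non-normal locus, I pull $C$ back to $\bar C:=\pi^{-1}(C)\subset\bar X$; since $C$ misses $D$ generically, $\pi$ is an isomorphism near the generic point of $C$, so $\bar C$ is again reduced of codimension one and $r_{X\to C}(\mathcal L)$ compares to $r_{\bar X\to\bar C}(\bar{\mathcal L})$ (the pullback of sections on $C$ to $\bar C$ is injective, so $r_{X\to C}\ge$ the analogous quantity downstairs in a way that feeds the inequality correctly). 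Then I apply Theorem \ref{normal&divisor} on $\bar X$ to the \emph{union} $\bar C\cup\bar D$, or run the argument simultaneously for both divisors, and combine. The main obstacle I anticipate is the bookkeeping in this last step: making sure that the single copy of $r_{\bar X\to\bar D}(\bar{\mathcal L})$ supplied by Theorem \ref{normal&divisor} is enough to pay for \emph{both} the corank of $H^0(X,\mathcal L)\to H^0(\bar X,\bar{\mathcal L})$ \emph{and} the loss in passing from $r_{X\to C}$ to $r_{\bar X\to\bar C}$; this likely requires applying Theorem \ref{normal&divisor} to $\bar X$ with the combined divisor $\bar C+\bar D$ (or $\bar C\cup\bar D$) rather than to $\bar D$ alone, and checking that $r_{\bar X\to\bar C\cup\bar D}\le r_{\bar X\to\bar C}+r_{\bar X\to\bar D}$ with the right constants. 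The boundary case $h^0(X,\mathcal L)\le 1$ should again be dispatched trivially at the start, as in Theorem \ref{normal&divisor}.
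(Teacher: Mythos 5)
Your proposal starts from a misreading of the definition that derails the whole argument. Since $\Delta(X,\mathcal L)=\mathcal L^n-h^0(X,\mathcal L)+n$ carries $h^0$ with a \emph{minus} sign, the injection $\pi^*\colon H^0(X,\mathcal L)\hookrightarrow H^0(\bar X,\pi^*\mathcal L)$, together with $\mathcal L^n=(\pi^*\mathcal L)^n$ and $\dim X=\dim\bar X$, immediately gives $\Delta(X,\mathcal L)\ge\Delta(\bar X,\pi^*\mathcal L)\ge 0$ by Theorem \ref{fujita}. That is the paper's entire proof of the first claim. You declare this inequality to point ``the wrong way'' and then build a conductor-sequence argument to bound $h^0(X,\mathcal L)$ from \emph{below}; but a lower bound on $h^0(X,\mathcal L)$ yields an \emph{upper} bound on $\Delta(X,\mathcal L)$, which is useless here, and there is no negative ``corank term'' to absorb — the corank of $H^0(X,\mathcal L)$ in $H^0(\bar X,\pi^*\mathcal L)$ is nonnegative and only helps. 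The step where you write $\Delta(X,\mathcal L)\ge\Delta(\bar X,\bar{\mathcal L})+(\text{corank term})$ and then invoke Theorem \ref{normal&divisor} on $\bar D$ to ``pay'' for it is a sign error compounded.

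For the ``moreover'' part there is a genuine gap. The relevant comparison is not between $r_{X\to C}$ and $r_{\bar X\to\bar C}$ directly (indeed $r_{X\to C}\le r_{\bar X\to\bar C}$, which points the wrong way), but between the kernels: the paper identifies $h^0(X,\mathcal L)-r_{X\to C}(\mathcal L)=h^0(X,\mathcal L-C)$ and $h^0(\bar X,\pi^*\mathcal L)-r_{\bar X\to\bar C}(\pi^*\mathcal L)=h^0(\bar X,\pi^*\mathcal L-\bar C)$, and uses the injection $H^0(X,\mathcal L-C)\hookrightarrow H^0(\bar X,\pi^*\mathcal L-\bar C)$ to get $\Delta(X,\mathcal L)+r_{X\to C}-n\ge\Delta(\bar X,\pi^*\mathcal L)+r_{\bar X\to\bar C}-n\ge 0$ by Theorem \ref{normal&divisor}. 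Your proposed substitute — applying Theorem \ref{normal&divisor} to the union $\bar C\cup\bar D$ — cannot work as stated: restriction to a larger subscheme has a larger image, so $r_{\bar X\to\bar C\cup\bar D}\ge r_{\bar X\to\bar C}$ and the resulting inequality is \emph{weaker} than the one you need. You flag this bookkeeping as the main obstacle but do not resolve it; the missing idea is precisely the kernel identification above, and once you have it the conductor divisor $\bar D$ plays no role at all.
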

\begin{proof}
The first statement is a direct consequence of Thm \ref{fujita} and Thm \ref{veryample}. 

Let $\pi:\bar{X}\rightarrow X$ be the normalization morphism and let $\bar C$ be the proper transformation of $C$ in $\bar X$.
Since $H^0(X,\mathcal{L})\cong \pi^*H^0(X,\mathcal{L})\subset H^0(\bar{X},\pi^*\mathcal{L})$ and $(\mathcal{L})^{\dim X}=(\pi^*\mathcal{L})^{\dim \bar X}$, we have $\Delta(X,\mathcal{L})\ge \Delta(\bar{X},\pi^*\mathcal{L})\ge 0$. 

Next we have the following commutative diagram:

\[\xymatrix{
0 \ar[r] & H^0(\bar X,\pi^*\mathcal{L}-\bar C)\ar[r]
& H^0(\bar X,\pi^*\mathcal{L})\ar[r]^{\mathcal{R}_{\bar X\to\bar C}}& H^0(\bar C,\pi^*\mathcal{L}|_{\bar C})\\
0 \ar[r] & H^0(X,\mathcal{L}-C)\ar[r]\ar@{_(->}[u]_{\pi^*} & H^0(X,\mathcal{L})\ar[r]^{\mathcal{R}_{X\to C}}\ar@{_(->}[u]_{\pi^*}& H^0(C,\mathcal{L}|_C).
}
\]

Hence $h^0(X,\mathcal{L})-r_{X\to C}(\mathcal{L}) =h^0(X,\mathcal{L}-C)\le h^0(\bar X,\pi^*\mathcal{L}-\bar C)=h^0(\bar X,\pi^*\mathcal{L})-r_{\bar X\to \bar{C}}(\pi^*\mathcal{L})$.
Therefore $\Delta(X, \mathcal{L})+r_{X\to C}(\mathcal{L})-\dim X\ge \Delta(\bar X, \pi^*\mathcal{L})+r_{\bar X\to \bar{C}}(\pi^*\mathcal{L})-\dim \bar{X}\ge 0$, where the second inequality follows from Thm \ref{normal&divisor}.
\end{proof}

\begin{lemma}\label{increasing}
Let $(X,\mathcal{L})$ be a connected polarized demi-normal  scheme. Assume $X=X_1\cup X_2$ where $X_1$ is connected and $X_2$ is irreducible.  Then \[\Delta(X, \mathcal{L})\ge \Delta(X_1, \mathcal{L}|_{X_1}).\]

\end{lemma}
\begin{proof}
Let $C:=X_1\cap X_2$ be the connecting subscheme.
By \eqref{sectiongluing}, we have
\begin{align*}
\Delta(X, \mathcal{L})&\ge \Delta(X_1, \mathcal{L}|_{X_1})+\Delta(X_2, \mathcal{L}|_{X_2})+\max\{r_{X_1\to C}(\mathcal{L}|_{X_1}),r_{X_2\to C}(\mathcal{L}|_{X_2})\}-\dim X\\
&\ge \Delta(X_1, \mathcal{L}|_{X_1})+\Delta(X_2, \mathcal{L}|_{X_2})+r_{X_2\to C}(\mathcal{L}|_{X_2})-\dim X.
\end{align*}
By  Thm \ref{normal&divisor} and Thm \ref{deminormal&divisor}, $\Delta(X_2, \mathcal{L}|_{X_2})+r_{X_2\to C}(\mathcal{L}|_{X_2}))-\dim X\ge 0$ whenever $X_2$ is normal or not.
Therefore $\Delta(X, \mathcal{L})\ge \Delta(X_1, \mathcal{L}|_{X_1})$.
\end{proof}
As a result, we obtain the following theorem
\begin{thm}\label{mainthm}
Let $(X,\mathcal{L})$ be a connected polarized demi-normal  scheme.
Then  \[\Delta(X, \mathcal{L})\ge 0.\]
\end{thm}

We say that a connected demi-normal  scheme $X$ is a {\bf tree of subschemes} $X_i$, if $X=\cup X_i$ and $X_i\cap X_j$ is either an irreducible subscheme of codimension one or a subscheme of codimension $\ge 2$ for $i\not =j$.
This is equivalent to say that the dual graph of codimension $\le1$ points of $X$ is a tree. 

Let $X=\cup X_i$ be a connected demi-normal scheme, and let $C_{ij}:=X_i\cap X_j$ be the connecting subscheme of $X_i$ and $X_j$. 
Let $C:=\sum\limits_{\mathrm{codim}_XC_{ij}=1} C_{ij}$. Normalizing $X$ along $C$, we obtain $(\bar{X}:=\bigsqcup X_i, \bar{C}:=\sum\bar{C}_{ij})=\bigsqcup(X_i, \sum_j\bar{C}_{ij})$. Denote by $\nu:\bar{X}\to X$ the normalization morphism along $C$. The morphism $\nu$ induces an isomorphism $\phi_{ij}:\bar{C}_{ij}\stackrel{\cong}{\to} \bar{C}_{ji}$. Conversely, $X$ can be viewed as being constructed from $\bigsqcup(X_i, \sum\bar{C}_{ij})$ by gluing $\bar{X}$ along $\bar{C}$ via $\phi_{ij}$'s (see \cite[chapter 5]{KollarSMMP}). 

\begin{thm} \label{delta0schemes}
	Let $(X,\mathcal{L})$ be a connected polarized demi-normal scheme with \newline $\Delta(X, \mathcal{L})=0$. 
	 
 	Then $X$ is a tree of subschemes $X_i$'s with $\Delta(X_i,\mathcal{L}|_{X_i})=0$ glued along hyperplanes. 
\end{thm}
\begin{proof}
	By Lemma \ref{increasing}, we see that $\Delta(X_i,\mathcal{L}|_{X_i})=0$ for each $(X_i,\mathcal{L}|_{X_i})$. 
	
	Assume that $C_{ij}:=X_i\cap X_j$ is a subscheme of codimension one. By Lemma \ref{increasing}, we have 
	$\Delta(X_i\cup X_j,\mathcal{L}|_{X_i\cup X_j})=\Delta(X_i,\mathcal{L}|_{X_i})=\Delta(X_j,\mathcal{L}|_{X_j})=0$. 
	By the proof of Lemma \ref{increasing}, we have $r_{X_i\to C_{ij}}(\mathcal{L}|_{X_i}))=r_{X_j\to C_{ij}}(\mathcal{L}|_{X_j}))=\dim X$. 
	By Cor \ref{delta0gluing}, we see that $C_{ij}$ is a hyperplane of $X_i$ (resp. $X_j$) with respect to $\mathcal{L}|_{X_i}$ (resp. $\mathcal{L}|_{X_j}$). 
	Therefore, the statement follows.
\end{proof}
\begin{remark}
	Polarized varieties $(X_i,\mathcal{L}_i)$  with $\Delta(X_i,\mathcal{L}_i)=0$ have been classified in Thm \ref{delta0}. 
	We are able to describe hyperplanes $C_{ij}$ on $(X_i,\mathcal{L}_i)$. 
	In order to construct a connected  polarized demi-normal scheme $(X,\mathcal{L})$ from $(X_i,\mathcal{L}_i, C_{ij})$'s, we may first embed $X_i$ to $\mathbb{P}_i$ via $\Phi_{|\mathcal{L}_i|}$ and then embed  $\mathbb{P}_i$'s into a big projective space $\mathbb{P}$. 
	The hyperplanes $C_{ij}$'s are linear subspaces in $\mathbb{P}$ of the same dimension, so they are isomorphic to each other. 
	Thus, we can glue $X_i$'s along $C_{ij}$'s in $\mathbb{P}$ and obtain the desired $X$ and $\mathcal{L}=\mathcal{O}_X(1)$. 
\end{remark}


\begin{cor}

Let $(X,\Lambda)$ be a connected KSBA stable log scheme with Cartier index $I$. Then \[\Delta(X, I(K_X+\Lambda))\ge 0.\]
\end{cor}

\begin{remark}\label{construction}
	When $I=1$, $\mathrm{dim} X=2$ and $\Lambda$ is reduced, $\Delta(X, K_X+\Lambda)=(K_X+\Lambda)^2-p_g(X,\Lambda)+2\ge 0$ is just the stable log Noether inequality in \cite{LR13}. 
\end{remark}
%

In the following, we will construct a KSBA stable log scheme with Cartier index $I=1$ and $\Delta(X, K_X+\Lambda)= 0$. 
To avoid repetition, we only give examples with $2m+1$ irreducible components.
The construction of KSBA stable log scheme with $2m$ irreducible components is similar. 
These examples demonstrate that when $I=1$, the inequality $\Delta(X, K_X+\Lambda)\ge 0$ is sharp.  
\begin{example}\label{exmp}
	Let $B=H_1+...+H_{n+2}$ be a snc divisor consisting of hyperplanes on $\mathbb{P}^n$. We have $K_{\mathbb{P}^n}+B\sim \mathcal{O}_{\mathbb{P}^n}(1)$ and $(\mathbb{P}^n,B)$ is lc.
	
	\begin{align*}
	\text{Let}\quad &(\overline{X_1},\overline{D_{1,2}}+\overline{\Lambda_1})\cong (\mathbb{P}^n,H_1+(H_2+...+H_{n+2})),  \\
	&(\overline{X_{k}},\overline{D_{k,k-1}}+\overline{D_{k,k+1}}+\overline{\Lambda_{k}})\cong (\mathbb{P}^n,H_1+H_2+(H_3+...+H_{n+2})), k=2,...,2m,  \\
	&(\overline{X_{k}},\overline{D_{k,k-1}}+\overline{D_{k,k+1}}+\overline{\Lambda_{k}})\cong (\mathbb{P}^n,H_2+H_1+(H_3+...+H_{n+2})), k=3,...,2m-1, \quad\text{and} \\
	&(\overline{X_{2m+1}},\overline{D_{2m+1,2m}}+\overline{\Lambda_{2m+1}})\cong (\mathbb{P}^n,H_2+(H_1+...+H_{n+2})).
	\end{align*}

	We use the method in Remark \ref{construction} to construct the polarized demi-normal scheme.
	We glue $\overline{X_{2i-1}}$ and $\overline{X_{2i}}$ by the isomorphism $\overline{D_{2i-1,2i}}\cong H_1\cong \overline{D_{2i,2i-1}}$ induced from $\overline{X_{2i-1}}\cong \mathbb{P}^n\cong \overline{X_{2i}}$, $i=1,...,m$.  Similarly, we glue $\overline{X_{2i}}$ and $\overline{X_{2i+1}}$ by the isomorphism $\overline{D_{2i,2i+1}}\cong H_2\cong \overline{D_{2i+1,2i}}$, $i=1,...,m$. 
	Then we obtain a  demi-normal log scheme $X$ and a boundary divisor $\Lambda$ which is the image of $\sum \overline{\Lambda_i}$. The divisor $K_X+\Lambda$ is a polarization on $X$ with $\Delta(X, K_X+\Lambda)= 0$. Note that $(K_X+\Lambda)|_{X_i}\cong \mathcal{O}_{\mathbb{P}^n}(1)$.  
	By Thm 5.38 in \cite{KollarSMMP} the singularities of the log scheme $(X,\Lambda)$ are slc. Thus $(X,\Lambda)$ is a KSBA stable log scheme with Cartier index one and $\Delta(X, K_X+\Lambda)= 0$.
	Moreover, $( K_X+\Lambda)^n=2m+1$. See Figure \ref{fig:n2} for the $n=2$ case.

\begin{figure}[h!]
	\centering
		%
		%
	\begin{tikzpicture}[font=\tiny]
		
		\tikzset{declare function={
				mght(\x,\th,\m)=\x* tan(\th)+\m;
				arctheta(\h,\d)=atan (\h/\d);
			}
		}
		\begin{scope}[xshift=0cm,xscale=1]
			\coordinate (O1) at (2,2);  
			\draw[fill=blue, opacity=0.2, rotate around={0:(O1)}] (O1) ellipse (1.3 and 1.8);
			\node[left] at (0.5,2){$\mathbb{P}^2$};
			\draw (1.7,0.5)--(2.6,3.3); 
			\node[right] at (2.2,1.7){$H_1$};
			\draw (2.3,0.5)--(1.4,3.3); 
			\node[left] at (1.8,1.7){$H_2$};
			\draw (1.1,3)--(2.8,3); 
			\node[right] at (3,3.2){$H_3$};
			\draw (1,2.2)--(2.9,2.6);
			\node[right] at (3.2,2.5){$H_4$};

		\end{scope}
		\begin{scope}[xshift=10cm,xscale=1]
			\tikzmath{ \r = 4; 
				\th=10;
				\x=4; 
				\d=3;
				\m=0.3;
				\h=\x* tan \th+\m;
				\tht=atan (\h/\r);
			};  
			\coordinate (O) at (0,0);
			
			\tikzmath{\rt=140;}

			\tikzmath{\y1= mght(5,\th,\m);
				\tt1=arctheta(\y1,8-5);
				\y2= mght(2,\th,\m);
				\tt2=arctheta(\y2,8-2);}
			\fill[blue,opacity=0.2,rotate around={\rt:(O)}] (5, \y1) arc [radius=\y1/sin \tt1,start angle=180-\tt1, end angle=180+\tt1] --(2, -\y2) arc [radius=\y2/sin \tt2,start angle=180+\tt2, end angle=180-\tt2]--cycle; 
			\draw[rotate around={\rt:(O)}] ({180-\th}:0.5)-- (-\th:4.8);
			\draw[rotate around={\rt:(O)}] ({180+\th}:0.5)-- (\th:4.8);
			\tikzmath{\m=0.2;\y1= mght(4,\th,\m);
				\tt1=arctheta(\y1,8-4);
				\y2= mght(3,\th,\m);
				\tt2=arctheta(\y2,8-3);}
			\draw[rotate around={\rt:(O)}] (4, \y1) arc [radius=\y1/sin \tt1,start angle=180-\tt1, end angle=180+\tt1];
			\draw[rotate around={\rt:(O)}] (3, -\y2) arc [radius=\y2/sin \tt2,start angle=200+2*\tt2, end angle=200];

			\tikzmath{\rt=120;}

			\tikzmath{\y1= mght(5,\th,\m);
				\tt1=arctheta(\y1,8-5);
				\y2= mght(2,\th,\m);
				\tt2=arctheta(\y2,8-2);}
			\fill[blue,opacity=0.2,rotate around={\rt:(O)}] (5, \y1) arc [radius=\y1/sin \tt1,start angle=180-\tt1, end angle=180+\tt1] --(2, -\y2) arc [radius=\y2/sin \tt2,start angle=180+\tt2, end angle=180-\tt2]--cycle; 
			\tikzmath{\m=0.2;\y1= mght(4,\th,\m);
				\tt1=arctheta(\y1,8-4);
				\y2= mght(3,\th,\m);
				\tt2=arctheta(\y2,8-3);}
			\draw[thick,red,rotate around={\rt:(O)}] ({180-\th}:0.5)-- (-\th:4.8);
			\draw[thick,red,rotate around={\rt:(O)}] ({180+\th}:0.5)-- (\th:4.8);
			\draw[rotate around={\rt:(O)}] (4, \y1) arc [radius=\y1/sin \tt1,start angle=180-\tt1, end angle=180+\tt1];
			\draw[rotate around={\rt:(O)}] (3, \y2) arc [radius=\y2/sin \tt2,start angle=160-2*\tt2, end angle=160];

			\tikzmath{\rt=30;}

			\tikzmath{\y1= mght(5,\th,\m);
				\tt1=arctheta(\y1,8-5);
				\y2= mght(2,\th,\m);
				\tt2=arctheta(\y2,8-2);}
			\fill[blue,opacity=0.2,rotate around={\rt:(O)}] (5, \y1) arc [radius=\y1/sin \tt1,start angle=180-\tt1, end angle=180+\tt1] --(2, -\y2) arc [radius=\y2/sin \tt2,start angle=180+\tt2, end angle=180-\tt2]--cycle; 
			\draw[rotate around={\rt:(O)}] ({180-\th}:0.5)-- (-\th:4.8);
			\draw[thick,red,rotate around={\rt:(O)}] ({180+\th}:0.5)-- (\th:4.8);
			\tikzmath{\m=0.2;\y1= mght(4,\th,\m);
				\tt1=arctheta(\y1,8-4);
				\y2= mght(3,\th,\m);
				\tt2=arctheta(\y2,8-3);}
			\draw[rotate around={\rt:(O)}] (4, \y1) arc [radius=\y1/sin \tt1,start angle=180-\tt1, end angle=180+\tt1];
			\draw[rotate around={\rt:(O)}] (3, -\y2) arc [radius=\y2/sin \tt2,start angle=200+2*\tt2, end angle=200];
			
			\node[above] at (140:5.2) {$X_{1}$};
			\node[above] at (130:5.3) {$D_{12}$};
			\node[left] at (143:3.2) {$\Lambda_{1}$};
			\node[above] at (120:5.2) {$X_{2}$};
			\node[above] at (110:5.3) {$D_{23}$};
			\node[left] at (113:3.2) {$\Lambda_{2}$};
			\node[right] at (30:5.2) {$X_{2m+1}$};
			\node[above] at (40:5.3) {$D_{2m,2m+1}$};
			\node[right] at (29:2.8) {$\Lambda_{2m+1}$};
			\draw[dashed] (100:3.5) arc [radius=3.5,start angle=100, end angle=50];
			
		\end{scope}
	\end{tikzpicture}
	\caption{$n=2$ case.} \label{fig:n2}
\end{figure}

\end{example}

%

\bibliographystyle{alpha}

\end{document}